\newcommand{\VozmozhnyPeremeny}{}
\def\VozmozhnyPeremeny#1{#1}
\newtheorem{theorem}{Theorem}
\theoremstyle{definition}
\newtheorem{dfn}{Definition}
\newtheorem{xmp}{Example}
\theoremstyle{remark}
\newtheorem*{remark}{Remark}
\newtheorem*{ir}{Important remark}
\author{I.G. Korepanov}
\title{An integral bilinear form and related forms on abelian groups as hexagon cocycles}
\date{March 2018}
\begin{document}

\sloppy

\maketitle

\begin{abstract}
Hexagon relations are algebraic realizations of four-dimen\-sional Pachner moves, and there are hexagon relations admitting nontrivial cohomologies and leading thus to piecewise linear (PL) 4-manifold invariants. We show that some---but not all!---of the known nontrivial cohomologies can be obtained from a single integral bilinear form corresponding to a PL 4-manifold by using a Frobenius homomorphism for a half of `color' variables (or different Frobenius homomorphisms for both halves). This form can be regarded as a sophisticated analogue of the manifold's intersection form.
\end{abstract}

\section{Introduction}\label{s:i}

\subsection{Generalities}

A triangulation of a piecewise linear (PL) 4-manifold can be transformed into its any other triangulation by a finite sequence of \emph{Pachner moves}~\cite{Pachner,Lickorish}. Hence, it is natural to expect that a PL 4-manifold \emph{invariant} can be constructed if we have an \emph{algebraic realization} of Pachner moves---informally speaking, such formulas whose structure corresponds to these moves naturally. Such formulas are often called \emph{hexagon relations} (or, more generally, $(n+2)$-gon relations for $n$-manifolds, see, for instance,~\cite{DM}).

A fruitful version of hexagon relation has been proposed in~\cite{bosonic,cubic}. In it, a \emph{two-component} `color'~$(x_t,y_t)$ is ascribed to every tetrahedron~$t$---3\nobreakdash-face of a triangulation. Thus, there appear ten variables on the 3-faces of each pentachoron (4\nobreakdash-simplex)~$u$ and it is required that these ten variables obey \emph{five} relations. Moreover, the variables $x_t,y_t$ in~\cite{bosonic,cubic} belonged to a field, and the relations were linear. As we will see below in this paper, this construction can work productively also, at least, for modules over ring~$\mathbb Z$, that is, \emph{abelian groups}.

The most interesting manifold invariants appear if we use \emph{cohomology} of our hexagon relations (in perfect analogy with the fact that powerful invariants of \emph{knots} and their higher analogues are obtained from \emph{quandle} cohomology~\cite{CKS}). In~\cite{cubic}, we have calculated some `polynomial' cohomologies; one can see that they are very nontrivial, and give nontrivial manifold invariants.

The nature of these polynomial cohomologies looks, at the moment, quite mysterious. At the same time, there is a hope that it can be clarified; compare, for instance, the study of polynomial \emph{quandle} cocycles in~\cite{CEF} and references therein. A natural desire is also to relate our cohomologies to known algebraic structures appearing in the study of 4-manifolds, or at least find parallels between them.

\subsection{What we do in the present paper}

We bring to light some parallels between one cocycle appearing in our work and the \emph{intersection form} of 4-manifolds (see textbooks~\cite{GS,Scorpan}). We construct a similar $\mathbb Z$-bilinear form in the framework of our theory. Further, we show that \emph{some} of nontrivial polynomial hexagon cocycles in finite characteristics found in~\cite{cubic} can be derived from this single $\mathbb Z$-bilinear form---to be exact, its modifications involving finite fields---using manipulations with Frobenius homomorphisms.

\begin{ir}
Not all polynomial cocycles could be obtained that way, at least by now!
\end{ir}

\subsection{Notational conventions}\label{ss:cnvs}

Typically, we denote 2\nobreakdash-simplices---triangles---by the letter~$s$, 3\nobreakdash-simplices---tetrahedra---by the letter~$t$, and 4\nobreakdash-simplices---pentachora---by the letter~$u$.

All vertices of any triangulated object are assumed to be \emph{numbered}. A triangle $s=ijk$ has vertices whose numbers are $i$, $j$ and~$k$.

Moreover, when we denote a simplex by its vertices, these go, by default, in the \emph{increasing} order. For the above triangle~$s$, this means that $i<j<k$. 

\subsection{The content of the paper by sections}

Below,
\begin{itemize}\itemsep 0pt
 \item in Section~\ref{s:h}, we recall, very briefly, the notion of \emph{permitted colorings} of 3-faces satisfying the \emph{full hexagon}. One small new moment is that we formulate all this in terms of general abelian groups,
 \item in Section~\ref{s:c}, we introduce our $\mathbb Z$-bilinear hexagon cocycle,
 \item in Section~\ref{s:inv}, we describe two kinds of manifold invariants: the `probabilities' of the values that our bilinear `action' can take on a manifold, and just the action itself understood as an \emph{integral} bilinear form taken to within invertible $\mathbb Z$-linear transformations and adding zero direct summands,
 \item in Section~\ref{s:ff}, we recall polynomial cocycles over finite fields from~\cite{cubic} and show how \emph{some} of these can be reproduced using our bilinear form and `Frobenius tricks',
 \item in Section~\ref{s:if}, we explain why the usual intersection form of a 4-manifold can be seen as an analogue of our $\mathbb Z$-bilinear `action',
 \item and finally, in Section~\ref{s:d}, we give a very brief discussion of our results and some related intriguing questions.
\end{itemize}

\section{Hexagon relation}\label{s:h}

\subsection{Colorings by an abelian group}\label{ss:col}

Let $G$ be an abelian group. We will color tetrahedra~$t$---that is, triangulation 3-faces---by \emph{pairs} $(x_t,y_t)$ of elements $x_t,y_t\in G$, and call this simply \emph{$G$-coloring}. For a pentachoron~$u=ijklm$, we introduce two vector columns of height~$5$, corresponding to its 3-faces going in the inverse lexicographic order:
\begin{equation}\label{xy}
\mathsf x_u= \begin{pmatrix} x_{jklm}\\ x_{iklm}\\ x_{ijlm}\\ x_{ijkm}\\ x_{ijkl} \end{pmatrix},\quad
\mathsf y_u= \begin{pmatrix} y_{jklm}\\ y_{iklm}\\ y_{ijlm}\\ y_{ijkm}\\ y_{ijkl} \end{pmatrix}.
\end{equation}
We also take the following matrix with integer entries from~\cite[Eqs.~\VozmozhnyPeremeny{(5)~and~(6)}]{cubic}:
\[
\mathcal R = \begin{pmatrix}0 & -2 & 1 & 1 & -2\\
0 & -1 & 0 & 1 & -1\\
-1 & 2 & -2 & 0 & 1\\
-1 & 3 & -2 & -1 & 2\\
0 & 1 & -1 & 0 & 0\end{pmatrix}.
\]

\begin{dfn}
A coloring of (the 3-faces of) a pentachoron~$u$ is called \emph{permitted} if the following relation holds:
\[
\mathsf y_u=\mathcal R\mathsf x_u. %\quad \upeta=\mathcal R\upxi.
\]
\end{dfn}

\begin{dfn}
A coloring of a triangulated piecewise linear 4-manifold~$M$ is called permitted if it induces permitted colorings on all its pentachora.
\end{dfn}

Permitted colorings of the initial and final configurations of any Pachner move are in good correspondence with each other. We say that they satisfy \emph{full set theoretic hexagon}. For exact formulations, the reader is referred to~\cite[\VozmozhnyPeremeny{Sections 2 and~3}]{cubic}.

\begin{remark}
The fact that we are using here an arbitrary abelian group instead of a field in~\cite{cubic} brings nothing significantly new into our definitions and reasonings.
\end{remark}

\subsection{Double colorings}\label{ss:dc}

One case of special importance is \emph{double coloring}. This is, by definition, a coloring in the sense of Subsection~\ref{ss:col} with $G$ being a direct sum of two abelian groups:
\[
G = A \oplus B.
\]
In this case, we will use the following notations for the pairs of elements of each group:
\[
(x_t,y_t)\in A^{\oplus 2},\quad (\xi_t,\eta_t)\in B^{\oplus 2}.
\]
Similarly to~\eqref{xy}, we introduce four columns:
\[
\mathsf x_u= \begin{pmatrix} x_{jklm}\\ x_{iklm}\\ x_{ijlm}\\ x_{ijkm}\\ x_{ijkl} \end{pmatrix},\quad
\mathsf y_u= \begin{pmatrix} y_{jklm}\\ y_{iklm}\\ y_{ijlm}\\ y_{ijkm}\\ y_{ijkl} \end{pmatrix},\quad
\upxi_u= \begin{pmatrix} \xi_{jklm}\\ \xi_{iklm}\\ \xi_{ijlm}\\ \xi_{ijkm}\\ \xi_{ijkl} \end{pmatrix}\text{ \ \ and \ \ }
\upeta_u= \begin{pmatrix} \eta_{jklm}\\ \eta_{iklm}\\ \eta_{ijlm}\\ \eta_{ijkm}\\ \eta_{ijkl} \end{pmatrix},
\]
and a permitted double coloring of a pentachoron is of course such that
\begin{equation}\label{ye}
\mathsf y_u=\mathcal R\mathsf x_u, \qquad \upeta_u=\mathcal R\upxi_u.
\end{equation}

\section[A hexagon cocycle in the form of a $\mathbb Z$-bilinear form]{A hexagon cocycle in the form of a $\boldsymbol{\mathbb Z}$-bilinear form}\label{s:c}

\begin{theorem}
The following bilinear form:
\begin{multline}\label{Phi}
\Phi_u(\mathsf x_u, \upxi_u) = (x_{jklm}+y_{jklm}) \otimes (\xi_{ijkl}+\eta_{ijkl}) \\
 = (x_{jklm}-2x_{iklm}+x_{ijlm}+x_{ijkm}-2x_{ijkl}) \otimes
(\xi_{iklm}-\xi_{ijlm}+\xi_{ijkl})
\end{multline}
is a nontrivial hexagon 4-cocycle. Here $u=ijklm$, and the form~$\Phi$ depends thus on a pair ``permitted $A$-coloring, permitted $B$-coloring'' (relations~\eqref{ye} are of course implied) and takes values in the abelian group $A\otimes B$.
\end{theorem}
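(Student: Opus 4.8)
The plan is to verify the cocycle condition directly on a $5$-simplex and then establish nontriviality through the associated manifold invariant. Recall from~\cite[Sections 2 and~3]{cubic} that, in our setting, $\Phi$ is a hexagon $4$-cocycle precisely when, for every $5$-simplex $v=012345$ carrying a permitted double colouring of its $3$-faces,
\[
\sum_{i=0}^{5}(-1)^i\,\Phi_{\partial_i v}=0\quad\text{in }A\otimes B,
\]
where $\partial_i v=012345\setminus\{i\}$ ranges over the six pentachora of $v$ (each of which inherits a permitted colouring, so every term makes sense); and $\Phi$ is nontrivial when it is not a coboundary $\delta g$ of a hexagon $3$-cochain.

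For the cocycle property I would start from the first line of~\eqref{Phi}. Writing $X_t:=x_t+y_t\in A$ and $\Xi_t:=\xi_t+\eta_t\in B$ for a tetrahedron~$t$ (quantities that depend only on the colour of~$t$), we have $\Phi_u=X_{t'}\otimes\Xi_{t''}$, where $t'$, resp.\ $t''$, is the $3$-face of~$u$ opposite its smallest, resp.\ largest, vertex. For $v=012345$ the smallest vertex of $\partial_i v$ is~$0$ except when $i=0$, and the largest is~$5$ except when $i=5$; hence every $t'$ that occurs is a $3$-face of $\partial_0 v=12345$ and every $t''$ is a $3$-face of $\partial_5 v=01234$, and
\begin{multline*}
\sum_{i=0}^{5}(-1)^i\Phi_{\partial_i v}=X_{2345}\otimes\Xi_{1234}-X_{2345}\otimes\Xi_{0234}+X_{1345}\otimes\Xi_{0134}\\
-X_{1245}\otimes\Xi_{0124}+X_{1235}\otimes\Xi_{0123}-X_{1234}\otimes\Xi_{0123}.
\end{multline*}
Since the $A$- and $B$-colourings vary independently, it is enough to regard the $X$'s and the $\Xi$'s as two separate families of quantities constrained only by~\eqref{ye}.

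The key input is a linear-algebra fact about the given matrix: $I+\mathcal R$ has rank~$3$, its left kernel being spanned by $(1,-1,1,0,0)$ and $(0,0,1,-1,1)$. Through~\eqref{ye} this translates into the statement that for a permitted colouring of any pentachoron $u=ijklm$ the five numbers attached to its $3$-faces satisfy
\[
X_{jklm}-X_{iklm}+X_{ijlm}=0,\qquad X_{ijlm}-X_{ijkm}+X_{ijkl}=0,
\]
and likewise for the $\Xi$'s. Feeding these identities, for $u=\partial_0 v$ into the $X$'s and for $u=\partial_5 v$ into the $\Xi$'s, into the preceding display makes it collapse to
\[
(X_{1345}-X_{2345})\otimes(\Xi_{0134}-\Xi_{0124}+\Xi_{0123}),
\]
whose right-hand tensor factor is exactly the second identity above for $u=\partial_5 v$, hence~$0$. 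So $\sum_i(-1)^i\Phi_{\partial_i v}=0$ and $\Phi$ is a cocycle (only the relations for two of the six pentachora were used).

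Finally, for nontriviality I would argue that a coboundary $\delta g$ contributes~$0$ to the signed sum $\sum_u\varepsilon_u(\delta g)_u$ over the pentachora of any closed triangulated $4$-manifold, since each interior $3$-face occurs in exactly two pentachora with opposite signs; it therefore suffices to produce a closed $4$-manifold with a permitted double colouring on which $\sum_u\varepsilon_u\Phi_u\neq0$. This is what Section~\ref{s:if} does (compare the non-degenerate intersection form of, say, $\mathbb{CP}^2$), and it is also reflected in Section~\ref{s:ff}, whose finite-characteristic specializations of $\Phi$ are polynomial cocycles already proved nontrivial in~\cite{cubic}. I expect the genuine difficulty to lie less in the algebra than in two places: keeping the combinatorics of $3$-faces, extremal vertices and signs straight in the reduction of the second step, and recognizing the rank drop of $I+\mathcal R$ together with the resulting linear relations among the $x_t+y_t$ — which is precisely the mechanism behind the cancellation; rendering the nontriviality argument self-contained (instead of by forward reference) is the other point that needs care.
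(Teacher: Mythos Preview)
Your cocycle verification is correct and is precisely a clean execution of the paper's ``Direct calculation'': the observation that $I+\mathcal R$ has rank~$3$ with left kernel spanned by $(1,-1,1,0,0)$ and $(0,0,1,-1,1)$ gives exactly the two linear relations among the quantities $X_t=x_t+y_t$ (and $\Xi_t$) that make the alternating sum on the boundary of a $5$-simplex collapse to zero, and your bookkeeping of extremal vertices and signs is accurate.

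One small correction on nontriviality: your forward reference to Section~\ref{s:if} does not do what you say, since that section treats the \emph{classical} cup-product cocycle $\varphi_{ijklm}=x_{ijk}\xi_{klm}$ on $2$-face colourings, not $\Phi$; it is presented only as an analogue, and the paper explicitly leaves open (Section~\ref{s:d}) whether the two forms coincide. Your second route---specializing $\Phi$ as in Section~\ref{s:ff} to recover polynomial cocycles already shown nontrivial in~\cite{cubic}---is the one that actually works, so lean on that alone.
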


\begin{proof}
Direct calculation. %File {\tt cocycles.wxmx}.
\end{proof}

\begin{ir}
Recall (Subsection~\ref{ss:cnvs}) that our notational conventions imply that $i<j<k<l<m$, so $ijkl$ may be called the \emph{front} 3-face of pentachoron $u=ijklm$, while $jklm$---its \emph{rear} 3-face. The first line of~\eqref{Phi} shows that $\Phi_u$ is the product of two quantities belonging to these two 3-faces. This is especially interesting when compared with the manifold's \emph{intersection form}, see Section~\ref{s:if} below.
\end{ir}

\section{Invariants}\label{s:inv}

\subsection{The action}

For a triangulated oriented 4-manifold~$M$, maybe with boundary, we introduce the following \emph{action}, depending on a permitted coloring of~$M$:
\begin{equation}\label{S}
S = \sum_u \epsilon_u \Phi_u(\mathsf x_u, \upxi_u),
\end{equation}
where the sum goes over all triangulation pentachora~$u$, and $\epsilon_u=1$ if the orientation of~$u$ determined by the increasing order of its vertices coincides with its orientation induced from~$M$, and $\epsilon_u=-1$ otherwise.

Below, in Subsections \ref{ss:subgr} and~\ref{ss:ibf}, we specialize this action for two interesting cases.

\subsection{Subgroups of permitted colorings and probabilities of action values}\label{ss:subgr}

\begin{theorem}\label{th:P}
Let $A$ and~$B$ be as in Subsection~\ref{ss:dc} and, moreover, let them be \emph{finite}, and let a subgroup $F\subset A\oplus B$ be given. For each permitted $F$-coloring, action~$S$ takes some value $v\in A\otimes B$. Define the \emph{probability} of value~$v$ as
\begin{equation}\label{P}
\mathrm P(v) = \frac{\# (S=v)}{\# (\text{all permitted colorings})},
\end{equation}
where $\#$ means the cardinality of a set; so, $\# (S=v)$ is the number of those permitted colorings where $S=v$, compare~\cite[\VozmozhnyPeremeny{(19)}]{cubic}.

Then, the probabilities\/ $\mathrm P(v)$ for all\/ $v\in A\otimes B$ are invariants of the piecewise linear manifold~$M$.
\end{theorem}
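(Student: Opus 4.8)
The plan is to deduce Theorem~\ref{th:P} from the fact, already established above, that $\Phi$ is a hexagon 4-cocycle together with the standard machinery relating cocycles on Pachner moves to manifold invariants. The quantity $\mathrm P(v)$ is built from the action $S$ of~\eqref{S}, which is a sum of $\pm\Phi_u$ over pentachora; so the first step is to recall precisely in what sense $S$ behaves under a Pachner move. Since the colorings of the initial and final configurations of a move satisfy the full set-theoretic hexagon (they are in bijective correspondence as described in~\cite{cubic}), and since $\Phi$ is a hexagon \emph{cocycle}, the difference of actions $S_{\text{after}}-S_{\text{before}}$ equals the coboundary term, which vanishes for the closed configuration glued along the move: in other words, for colorings that match on the common boundary of the two sides of a Pachner move, $S$ takes the same value. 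This is the content I would extract from the cocycle property and the hexagon correspondence.

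The second step is to pass from this ``$S$ is invariant on corresponding colorings'' statement to the statement about the counting function $\#(S=v)$. Fix a Pachner move replacing a cluster of pentachora $U_{\mathrm{in}}$ by a cluster $U_{\mathrm{out}}$ inside $M$, with the rest of the triangulation (call its coloring data $c_{\partial}$ on the shared 3-faces, plus the coloring $c_{\mathrm{rest}}$ on tetrahedra outside the cluster) held fixed. For each admissible boundary value $c_\partial$, the hexagon correspondence gives a bijection between permitted $F$-colorings of $U_{\mathrm{in}}$ extending $c_\partial$ and permitted $F$-colorings of $U_{\mathrm{out}}$ extending $c_\partial$; under this bijection, the contribution of the cluster to $S$ — namely $\sum_{u\in U_{\mathrm{in}}}\epsilon_u\Phi_u$ versus $\sum_{u\in U_{\mathrm{out}}}\epsilon_u\Phi_u$ — changes exactly by the coboundary, which by the cocycle identity is a fixed element depending only on $c_\partial$ (and in fact, for the glued-up situation, is zero, or at worst a $c_\partial$-dependent shift that is the same for \emph{all} colorings with that boundary value). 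Consequently, summing over all $(c_\partial, c_{\mathrm{rest}})$ and over the fibre of colorings, the total count of permitted $F$-colorings of $M$ with $S=v$ is unchanged by the move. The denominator $\#(\text{all permitted colorings})$ is the special case $\sum_v \#(S=v)$ and is likewise invariant, so the ratio $\mathrm P(v)$ is invariant. Finally, since any two triangulations of a PL 4-manifold are connected by a finite sequence of Pachner moves~\cite{Pachner,Lickorish}, invariance under a single move gives invariance of $\mathrm P(v)$ for the manifold $M$.

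I would also record the two small technical points that make the counting work: first, one must check that imposing the subgroup constraint $F\subset A\oplus B$ on colors is compatible with the hexagon correspondence — i.e. the bijection between the two sides of a move restricts to permitted $F$-colorings, which holds because the relations~\eqref{ye} are $\mathbb Z$-linear and $F$ is a subgroup, hence the set of permitted $F$-colorings of each pentachoron is again a subgroup and the move's correspondence is given by (restrictions of) group homomorphisms; second, finiteness of $A$ and $B$ guarantees that all the cardinalities in~\eqref{P} are finite, so the probabilities are well-defined rational numbers.

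The main obstacle I anticipate is the bookkeeping in the second step: making the shift-by-a-coboundary argument genuinely watertight requires being careful that, in a Pachner move, the ``coboundary'' contribution genuinely depends only on the boundary coloring $c_\partial$ and not on the interior, and that orientations/signs $\epsilon_u$ match up on the two sides. This is exactly where the precise formulation of the full set-theoretic hexagon from~\cite{cubic} (the bijection of permitted colorings, compatible with the boundary data) has to be invoked, rather than merely the numerical cocycle identity; once that correspondence is in hand the counting is routine. Everything else — well-definedness, the subgroup compatibility, reduction to a single move — is formal.
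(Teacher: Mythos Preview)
Your outline assembles the right ingredients---the cocycle property of~$\Phi$, the hexagon correspondence on colorings, and reduction to a single Pachner move via~\cite{Pachner,Lickorish}---but the counting step contains a genuine error. You assert that the hexagon correspondence gives a \emph{bijection} between permitted $F$-colorings of $U_{\mathrm{in}}$ and of $U_{\mathrm{out}}$ extending a fixed boundary~$c_\partial$, and from this conclude that both $\#(S=v)$ and $\#(\text{all permitted colorings})$ are individually unchanged by the move. This is false for the Pachner moves $2$--$4$ and $1$--$5$: as the paper itself records just below, in the itemized proof of Theorem~\ref{th:Z}, these moves introduce one (respectively four) new free parameters, so the correspondence is one-to-many and both counts are multiplied by the cardinality of the fibre. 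The whole point of taking the \emph{ratio}~$\mathrm P(v)$ rather than the raw count is to cancel this common factor; your argument, as written, would prove the stronger and false statement that the numerator alone is a PL invariant.

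The repair is straightforward once the gap is seen: for each fixed~$c_\partial$, show that the map from permitted $F$-colorings of the larger cluster to those of the smaller one is a surjective group homomorphism whose kernel has size independent of~$c_\partial$ (here your observation that the relations~\eqref{ye} are $\mathbb Z$-linear and $F$ is a subgroup does the work), and that $S$ is constant along each fibre (this is where the cocycle identity for~$\Phi$ enters). Then numerator and denominator scale by the same factor and~$\mathrm P(v)$ is preserved. The paper does not spell this out---its proof is a one-line reference to \cite[Theorem~4(i)]{cubic}---but the mechanism is exactly the one visible in the proof of Theorem~\ref{th:Z}, and your ``at worst a $c_\partial$-dependent shift'' hedge is unnecessary: for a genuine cocycle the difference of the two partial actions is zero, not merely constant.
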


\begin{proof}
It repeats the proof of \cite[Theorem~\VozmozhnyPeremeny{4(i)}]{cubic}.
\end{proof}

We will see in Section~\ref{s:ff} how to choose subgroup~$F$ in order to obtain \emph{some} of polynomial actions in~\cite{cubic}.

\subsection{Integral bilinear form}\label{ss:ibf}

Set now simply $A=B=\mathbb Z$, and the subgroup $F=G=A\oplus B$. Then action~$S$ given by~\eqref{S} becomes an integral bilinear form.

\begin{theorem}\label{th:Z}
Let bilinear form~$S$ (depending on a pair of permitted integral colorings) be defined according to~\eqref{S},~\eqref{Phi} with integer variables $x_t$ and~$\xi_t$. Then $S$, taken to within a zero direct summand and an invertible $\mathbb Z$-linear transformation, is a piecewise linear 4-manifold invariant.
\end{theorem}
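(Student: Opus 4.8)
My plan is to regard $S=S_T$, for each triangulation $T$ of $M$, as a concrete algebraic object attached to $T$, and then to drag it through the moves relating any two triangulations. First I would pin down the object. Since $A=B=\mathbb Z$ we have $A\otimes B=\mathbb Z$; the permitted integral $x$-colorings of $T$ and the permitted integral $\xi$-colorings of $T$ are cut out by one and the same system of $\mathbb Z$-linear equations inside $\mathbb Z^{N}$, $N=\#\{\text{tetrahedra of }T\}$ (namely: the $y_t$ defined through $\mathcal R$ and \eqref{xy} must agree on every internal tetrahedron), so they coincide as one finite-rank free abelian group $V(T)\subseteq\mathbb Z^{N}$, and $S_T$ of \eqref{S}--\eqref{Phi} is a $\mathbb Z$-bilinear map $V(T)\times V(T)\to\mathbb Z$. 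After choosing a basis of $V(T)$ it is represented by an integer matrix, well defined up to the change of basis $P^{\top}(\ \cdot\ )P$ with $P\in\mathrm{GL}(V(T))$ --- the same $P$ on both slots, because a Pachner move acts on the two slots by one and the same linear bijection --- and the theorem asserts that the class of this matrix, allowed in addition to gain or lose a zero direct summand, is independent of $T$. By Pachner's theorem~\cite{Pachner,Lickorish} it then suffices to check (a) invariance under a relabelling of the vertices of $T$ and (b) invariance under a single Pachner move.

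Point (a) should be formal: \eqref{S} is attached to the oriented triangulated manifold, the signs $\epsilon_u$ make $\Phi_u$ insensitive to the order in which the vertices of an individual $u$ are listed, and a global permutation of vertex labels just permutes the coordinates of $\mathbb Z^{N}$, preserving $V(T)$ (one uses here, as recorded in \cite[\VozmozhnyPeremeny{Sections 2 and~3}]{cubic}, that the relations $\mathsf y_u=\mathcal R\mathsf x_u$ are compatible with reorderings) and conjugating $S_T$ into itself. For (b) I would fix a Pachner move replacing a subcomplex $P\subset T$ by a subcomplex $P'\subset T'$ with $\partial P=\partial P'$, everything outside --- call it $Q$, with its colors --- staying put. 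By the full set-theoretic hexagon of \cite{cubic}, the permitted colorings of $P$ and of $P'$ inducing the same coloring of $\partial P$ are in $\mathbb Z$-linear bijection, up to adjoining to one side the colors of newly created internal tetrahedra on which nothing else depends; globally this yields a finite-rank free group $K$ and an isomorphism $\phi\colon V(T')\to V(T)\oplus K$ which is the identity on the colors of $Q$ and of $\partial P$, sends $V(T)$ to the colorings of $T'$ trivial on the internal tetrahedra of $P'$, and sends $K$ to the colorings of $T'$ supported on those tetrahedra and invisible outside $P'$. (Running the move the other way interchanges $T,T'$; if both sides create internal colors one gets $V(T)\oplus K\cong V(T')\oplus K'$, and nothing below changes.)

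Next I would compare $S_{T'}\circ(\phi^{-1}\times\phi^{-1})$ with $S_T$ on $V(T)\oplus K$. Writing $S_T=S_Q+S_P$ and $S_{T'}=S_Q+S_{P'}$ for the partial sums of \eqref{S} over the pentachora of $Q$, of $P$, and of $P'$, the $S_Q$ parts correspond to each other under $\phi$ (the identity on the coordinates they involve) and do not involve the $K$-coordinates, so everything reduces to $S_P$ versus $S_{P'}$. On colorings agreeing on $\partial P$, the hexagon cocycle property of $\Phi$ (the Theorem of Section~\ref{s:c}) gives $S_P=S_{P'}$ --- at worst after absorbing a coboundary term, which however, since an internal tetrahedron of a closed region occurs in exactly two of its pentachora with opposite induced orientation, feels only the shared $\partial P$ and is therefore the same on both sides. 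Hence $S_{T'}\circ(\phi^{-1}\times\phi^{-1})$ agrees with $S_T$ on all of $V(T)\times V(T)$, and the only remaining question is the behaviour of $S_{T'}$ in the $K$-directions, i.e.\ the blocks $S_{T'}(V(T),K)$, $S_{T'}(K,V(T))$ and $S_{T'}(K,K)$. Here I would exploit the first line of \eqref{Phi}: $\Phi_u$ is the product of a quantity on the \emph{rear} $3$-face of $u$ with one on the \emph{front} $3$-face of $u$, so a generator of $K$, being supported on a single new internal tetrahedron, enters $S_{P'}$ only through the new pentachora having that tetrahedron as front or rear face; tracking those few terms with the explicit $\phi$ and the matrix $\mathcal R$ should show that $S_{T'}$ vanishes on $K\times K$ and that the two mixed blocks can be cleared by an invertible $\mathbb Z$-linear change of basis of $V(T)\oplus K$ (adding to each generator of $K$ a suitable integral combination of generators of $V(T)$, and conversely). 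After this, $S_{T'}\circ(\phi^{-1}\times\phi^{-1})=S_T\oplus 0_K$, which is exactly ``$S_T$ to within a zero direct summand'', giving (b) and hence the theorem.

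The hard part will be precisely this last step: checking, move by move --- there being only two Pachner moves up to the evident up--down symmetry --- that $K$ is a \emph{null} summand for $S_{T'}$ (isotropic and, ideally, orthogonal to $V(T)$), or, where literal orthogonality fails, exhibiting the clearing transformation; this needs the coloring bijections of \cite{cubic} written out, together with a short computation involving the columns \eqref{xy} and the matrix $\mathcal R$. Everything else --- the reduction to moves, the relabelling invariance, the splitting $S_T=S_Q+S_P$ and the use of the hexagon identity --- is formal once the Theorem of Section~\ref{s:c} and Pachner's theorem are granted. (Alternatively, (b) could be organized as in \cite[Theorem~\VozmozhnyPeremeny{4}]{cubic}, showing directly that the bilinear form is, up to the stated equivalences, unchanged under each elementary rebuilding; compared with \cite{cubic} the only novelty is that the coefficients lie in $\mathbb Z$ rather than in a finite field, which the linear algebra does not notice.)
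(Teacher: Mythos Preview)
Your outline is the paper's argument, only more explicit: reduce to Pachner moves via \cite{Pachner,Lickorish}, use the full hexagon from \cite{cubic} to relate the spaces of permitted colorings (a bijection for 3--3, one extra $\mathbb Z$-parameter for 2--4, four for 1--5), and invoke the cocycle property of~$\Phi$ to match the actions. Your parenthetical alternative at the end \emph{is} the paper's proof.

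Your ``hard part'', however, is not hard---you have already done it. The identity $S_P=S_{P'}$ coming from the hexagon cocycle relation holds for \emph{every} permitted double coloring of the cluster $P\cup P'\cong\partial(\text{5-simplex})$, including those with an arbitrary $K$-component on the $P'$ side. For $(v+k,v'+k')\in(V(T)\oplus K)^2$, the induced coloring of~$P$ is determined by the restriction to~$\partial P$, hence by $(v,v')$ alone; so $S_{P'}$ evaluated on the full $(v+k,v'+k')$ equals $S_P(v,v')$, independently of $k,k'$. Therefore $S_{T'}\circ(\phi^{-1}\times\phi^{-1})=S_T\oplus 0_K$ on the nose, with no separate analysis of the $K$-blocks and no clearing transformation required. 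This is exactly what the paper means by ``the value of~$S$ remains the same\dots due to the cocycle property'': the new $\mathbb Z$-degrees of freedom are automatically null for~$S$.
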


\begin{proof}
First, we analyze what happens under any Pachner move. Everything goes in, essentially, the same way as in~\cite[Section~\VozmozhnyPeremeny{3}]{cubic} (although, there was a \emph{field} in~\cite{cubic} instead of our current~$\mathbb Z$):
\begin{itemize}\itemsep 0pt
\item for a Pachner move 3--3, there is a bijective---and certainly $\mathbb Z$-linear---correspondence between the permitted $\mathbb Z$-colorings before and after this move,
\item for a move 2--4, there appears one additional `$\mathbb Z$-degree of freedom': to each permitted coloring before the move there correspond colorings after the move, parameterized by~$\mathbb Z$,
\item for a move 1--5, there appear, in a similar way, four additional `$\mathbb Z$-degrees of freedom',
\item and for all Pachner moves, the value of~$S$ remains the same, after applying this correspondence, due to the cocycle property.
\end{itemize}

Finally, if we choose a different basis in the space (to be exact, free $\mathbb Z$-module) of permitted colorings, this will correspond, of course, to an invertible $\mathbb Z$-linear transformation applied to the form~$S$.
\end{proof}

\begin{theorem}\label{th:sym}
For a closed oriented triangulated 4-manifold~$M$, the form~$S$ of Theorem~\ref{th:Z} is symmetric.
\end{theorem}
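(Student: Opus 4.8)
The plan is to show that the difference $S(p,q)-S(q,p)$, where $p$ and $q$ denote the two permitted integral colourings inserted into the bilinear form, equals --- after multiplication by $2$ --- the value on $M$ of a hexagon \emph{coboundary}, and hence vanishes because $M$ is closed. A bilinear form is symmetric once every such difference vanishes, and this property survives passing to an invertible $\mathbb Z$-linear congruence and adding a zero direct summand; so the statement then follows for the invariant of Theorem~\ref{th:Z}.

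First I would rewrite both actions by means of the first line of~\eqref{Phi}, which displays $\Phi_u$, for $u=ijklm$, as the product of the quantity $\bar p_{jklm}:=x_{jklm}+y_{jklm}$ attached to the rear $3$-face and the quantity $\bar q_{ijkl}:=\xi_{ijkl}+\eta_{ijkl}$ attached to the front $3$-face; since $A=B=\mathbb Z$ we have $A\otimes B=\mathbb Z$ with $\otimes$ being multiplication. Both factors are intrinsic to their tetrahedra (each involves only the two colours of that tetrahedron), so, writing $R(u)$ and $F(u)$ for the rear and front $3$-faces of $u$,
\[
S(p,q)-S(q,p)=\sum_u\epsilon_u\,D_u,\qquad D_u:=\bar p_{R(u)}\bar q_{F(u)}-\bar p_{F(u)}\bar q_{R(u)}.
\]

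The key step is to introduce the $3$-cochain $\omega_t=y_t\,\xi_t-x_t\,\eta_t$, which is $\mathbb Z$-valued, bilinear in the pair of colourings and antisymmetric under $p\leftrightarrow q$; it is well defined on tetrahedra because in a permitted colouring of $M$ the ``second'' colours $y_t,\eta_t$ are determined by either pentachoron containing $t$ and agree there. The ``direct calculation'' I would carry out is the verification of the polynomial identity
\[
(d\omega)_u=\sum_{\ell=0}^{4}(-1)^{\ell}\,\omega_{\partial_\ell u}=2\,D_u
\]
after substituting $\mathsf y_u=\mathcal R\mathsf x_u$ and $\upeta_u=\mathcal R\upxi_u$. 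Using the second line of~\eqref{Phi}, $D_u$ is the antisymmetrisation in the variables $\mathsf x_u,\upxi_u$ of the product of the row vectors $(1,-2,1,1,-2)$ and $(0,1,-1,0,1)$, so the check reduces to comparing, for each of the ten unordered pairs $\{a,b\}$ of $3$-faces of $u$, the coefficient of $x_a\xi_b-x_b\xi_a$ produced by the rows of $\mathcal R$ in $d\omega$ with twice the corresponding coefficient coming from those two vectors; all ten agree. Finally, for a closed oriented $M$ the pairing of a coboundary with the fundamental cycle is zero,
\[
\sum_u\epsilon_u(d\omega)_u=\sum_t\omega_t\Bigl(\sum_{u\supset t}\epsilon_u(-1)^{\ell(u,t)}\Bigr)=0,
\]
the inner sum being exactly the coefficient of $t$ in $\partial[M]=0$. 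Hence $2\sum_u\epsilon_u D_u=0$, and since $A\otimes B=\mathbb Z$ is torsion free, $\sum_u\epsilon_u D_u=0$, i.e.\ $S(p,q)=S(q,p)$.

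I expect the only genuinely delicate point to be the factor $2$: the naive primitive of $D$ is $\tfrac12\omega$, which is not $\mathbb Z$-valued, so one cannot argue directly over $\mathbb Z$ and must instead use the honest $\mathbb Z$-cochain $\omega$ with $d\omega=2D$ and then invoke torsion-freeness of $\mathbb Z$ (equivalently, run the argument over $\mathbb Q$ and restrict). Everything else --- the identity $d\omega=2D$ and the vanishing of a coboundary on a closed manifold, which is already used implicitly in the proof of Theorem~\ref{th:Z} --- is routine.
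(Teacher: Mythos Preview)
Your argument is correct in outline, and the coboundary-on-a-closed-manifold mechanism is exactly what the paper uses. The difference is that the paper does \emph{not} need the factor~$2$ or the torsion-freeness step: it exhibits a $\mathbb Z$-valued $3$-cochain whose hexagon coboundary is $D_u$ itself, namely $\psi_t=(x_t+y_t)\eta_t$ (and, as a second witness, $-y_t(\xi_t+\eta_t)$). So your remark that ``the naive primitive of $D$ is $\tfrac12\omega$, which is not $\mathbb Z$-valued'' is not quite right --- a $\mathbb Z$-valued primitive does exist, it is just not antisymmetric under the Latin--Greek swap. Your antisymmetric choice $\omega_t=y_t\xi_t-x_t\eta_t$ is the difference of the paper's two primitives, which is why the factor~$2$ appears in your identity. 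Incidentally, combining the paper's two formulas gives $d\omega=-2D$, not $+2D$; you should recheck the sign in your ten-pair comparison, though of course this does not affect the conclusion. What your route buys is a manifestly swap-antisymmetric primitive (pleasant conceptually); what it costs is the extra torsion-freeness appeal, which the paper avoids entirely.
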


\begin{proof}
To see this, we subtract from~\eqref{Phi} the same expression, but with interchanges $x_t\leftrightarrow \xi_t$, \ $y_t\leftrightarrow \eta_t$. It turns out that the result is a coboundary, in the sense of hexagon cohomology. Below we write this coboundary even in two different ways:
\begin{align*}
(x_{jklm}+y_{jklm}) & (\xi_{ijkl}+\eta_{ijkl}) - (\xi_{jklm}+\eta_{jklm})(x_{ijkl}+y_{ijkl}) \\
 & = (x_{jklm}+y_{jklm})\eta_{jklm}-(x_{iklm}+y_{iklm})\eta_{iklm}+\dots\\
 & = -y_{jklm}(\xi_{jklm}+\eta_{jklm})+y_{iklm}(\xi_{iklm}+\eta_{iklm})-\dots\,.
\end{align*}
Here the omission points mean, in both cases, three obvious similar summands, corresponding to the remaining 3-faces $ijlm$, $ijkm$ and~$ijkl$, and coming with alternating signs.

Thus, the difference between $S$ and the same~$S$, but with the mentioned `Latin--Greek' interchanges, consists of summands belonging to triangulation tetrahedra, and, moreover, for a closed~$M$ they all cancel away.
\end{proof}

\begin{remark}
Of course, if we know the integral bilinear form~$S$ of this Subsection, we can get also all the forms mentioned above and involving abelian groups, by using the tensor product operation. The reader will hopefully have no difficulty in writing the exact formulas.
\end{remark}

\section{Particular case: polynomial cocycles over finite fields}\label{s:ff}

\subsection[Introducing dependence of~$\xi_t$ on~$x_t$ using Frobenius homomorphism]{Introducing dependence of~$\boldsymbol{\xi_t}$ on~$\boldsymbol{x_t}$ using Frobenius homomorphism}\label{ss:F}

We continue to use the notations of Subsection~\ref{ss:dc}. Let $A=B=\mathbb F_{p^n}$, that is, the finite field of $p^n$ elements, and let $F\subset A\oplus B$ be generated by such pairs $(a,b)=(x_t,\xi_t)$ that $b=a^{p^m}$, where $m\in \mathbb N_0=\{0,1,2,\dots\}$. Also, we replace the tensor product in the definition~\eqref{Phi} of~$\Phi_u$ by the usual multiplication in~$\mathbb F_{p^n}$. Then, it is easy to see that the so defined~$\Phi_u$ yields in the standard way the action $S=\sum_u \epsilon_u \Phi_u$ whose probabilities~\eqref{P} are manifold invariants, and this~$\Phi_u$ is a polynomial of degree~$p^m+1$ in the five variables~$x_t$, where $t\subset u$.

\begin{ir}
Probabilities $\mathrm P(v)$ for $v\in A\otimes B$ may be more informative than those defined according to the previous paragraph (and also used in~\cite{cubic}), because $A\otimes B=A\otimes_{\mathbb Z} B$ is typically greater that just the field $\mathbb F_{p^n} = A\otimes_{\mathbb F_{p^n}} B$ where the action defined in the previous paragraph takes values.
\end{ir}

\begin{xmp}
Let $p=2$, \ $n$~any natural number, and $m=0$. Then \eqref{Phi} becomes a homogeneous quadratic polynomial in the variables~$x_t$ in characteristic~2, namely
\[
(x_{jklm}+x_{ijlm}+x_{ijkm})(x_{iklm}+x_{ijlm}+x_{ijkl}).
\]
This is the same polynomial as appears in~\cite[Eq.~\VozmozhnyPeremeny{(22)}]{cubic}.
\end{xmp}

\begin{xmp}
Let again $p=2$, \ $n$~any natural number, but now $m=1$. Then \eqref{Phi} becomes a homogeneous \emph{cubic} polynomial, namely
\[
(x_{jklm}+x_{ijlm}+x_{ijkm})(x_{iklm}^2+x_{ijlm}^2+x_{ijkl}^2).
\]
This is the same polynomial as appears in~\cite[Eq.~\VozmozhnyPeremeny{(24)}]{cubic}.
\end{xmp}

\subsection{Example of a cocycle that could not be obtained by the Frobenius trick}

The following cubic cocycle in characteristic~2 could not (as yet) be obtained by the author using the above `Frobenius' or any similar trick: 
\begin{align}
x_{iklm}x_{ijkm}x_{ijkl} + x_{iklm} & x_{ijlm}x_{ijkl} + x_{jklm}x_{ijlm}x_{ijkl} \nonumber \\
 &  + x_{jklm}x_{ijlm}x_{ijkm} + x_{jklm}x_{iklm}x_{ijkm}. \label{cubic-first}
\end{align}
This cocycle appears, and in more elegant notations, in~\cite[Eq.~\VozmozhnyPeremeny{(23)}]{cubic}.

\subsection{Double Frobenius trick}\label{ss:FF}

One obvious generalization of the `Frobenius trick' of Subsection~\ref{ss:F} is to do it on both `Latin' and `Greek' variables. Namely, let now introduce variables~$\mathfrak x_t$ over the field $A=B=\mathbb F_{p^n}$, and let $F\subset A\oplus B$ consist of pairs $(a,b)=(x_t,\xi_t)$ parameterized by variables $c=\mathfrak x_t$ in such way that $a=c^{p^{m_1}}$ and $b=c^{p^{m_2}}$, where $m_1,m_2\in \mathbb N_0$.

For a single pentahoron~$u$, we consider the five $\mathfrak x_t$, \ $t\subset u$, as independent variables, and for the whole manifold~$M$, variables~$\mathfrak x_t$ are assumed to have the same linear dependencies as either~$x_t$ or~$\xi_t$. Recall that these dependencies arise due to~\eqref{ye} and the fact the pair $(x_t,y_t)$ or $(\xi_t,\eta_t)$ for any tetrahedron~$t$ must be the same regardless of the pentachoron~$u\supset t$ where we used formula~\eqref{ye} (and there are of course two such~$u$ for a non-boundary~$t$).

\begin{xmp}
Let again $p=2$, \ $n$~any natural number, $m_1=1$ and $m_2=2$. Then \eqref{Phi} becomes a homogeneous polynomial of the \emph{sixth} degree, namely
\[
(\mathfrak x_{jklm}^2+\mathfrak x_{ijlm}^2+\mathfrak x_{ijkm}^2)(\mathfrak x_{iklm}^4+\mathfrak x_{ijlm}^4+\mathfrak x_{ijkl}^4).
\]
The reader will hopefully have no problem in finding this polynomial also in an unnumbered formula in~\cite[\VozmozhnyPeremeny{Subsection~5.1}]{cubic}.
\end{xmp}

Still, as of now, neither cocycle~\eqref{cubic-first} nor many other cocycles from~\cite{cubic} could be obtained using anything like these tricks.

\section{Intersection form as a simple analogue of our construction}\label{s:if}

This time, we color \emph{2-faces} $s=ijk$, \ $i<j<k$ (remember Subsection~\ref{ss:cnvs}), by elements~$x_{ijk}\in \mathbb Z$.

\emph{Permitted colorings} are such where values $x_{ijk}$ make a \emph{cocycle}, that is,
\[
x_{ijk} - x_{ijl} + x_{ikl} - x_{jkl} = 0
\]
for any triangulation tetrahedron~$ijkl$.

In analogy with Subsection~\ref{ss:dc}, we consider now \emph{double} colorings $(x_s,\xi_s)$, where $\xi_{ijk}\in \mathbb Z$ also make a cocycle. A simple calculation shows that the bilinear form
\[
\varphi_{ijklm} = x_{ijk} \xi_{klm}
\]
well-known from the theory of the cup product, is (of course!) a hexagon cocycle. We introduce then the following simple `action' (with the same~$\epsilon_u$ as in~\eqref{S}):
\[
S=\sum_u \epsilon_u \varphi_u,
\]
which is nothing but the well-known \emph{intersection form}~\cite{GS,Scorpan} of a 4-manifold.

\section{Discussion of results}\label{s:d}

Our integral bilinear form in Subsection~\ref{ss:ibf} may actually simply \emph{coincide} with the usual intersection form. At the moment, there are neither calculations that could disprove this conjecture, nor a theory that would prove it.

Anyhow, what looks most interesting is that out form, together with `Frobenius tricks' from Section~\ref{s:ff}, yields \emph{some} but not all polynomial cocycles found in~\cite{cubic}. So, the nature of the remaining cocycles looks very intriguing.


\begin{thebibliography}{99}

\bibitem{CKS}
S. Carter, S. Kamada, M. Saito,
\textit{Surfaces in 4-Space},
Springer, Berlin, 2004.

\bibitem{CEF}
Indu Rasika Churchill, Mohamed Elhamdadi and Neranga Fernando,
\textit{The cocycle structure of the Alexander $f$-quandles on finite fields},
Journal of Algebra and Its Applications, 2017,
DOI:~\href{https://doi.org/10.1142/S0219498818501906}{10.1142/S0219498818501906}.
\href{https://arxiv.org/abs/1612.08968}{arXiv:1612.08968}

\bibitem{DM}
A. Dimakis, F. M\"uller-Hoissen,
\textit{Simplex and Polygon Equations},
\href{http://dx.doi.org/10.3842/SIGMA.2015.042}{SIGMA} {\bf 11} (2015), paper~042, 49~pages.
\href{https://arxiv.org/abs/1409.7855}{arXiv:1409.7855}.

\bibitem{GS}
Robert E. Gompf, Andr\'as I. Stipsicz,
\textit{4-Manifolds and Kirby Calculus},
American Mathematical Society, Providence, Rhode Island, 1999.

\bibitem{bosonic}
I.G. Korepanov,
\textit{Bosonic pentachoron weights and multiplicative 2-cocycles},
\href{https://arxiv.org/abs/1705.00187}{arXiv:1705.00187}.

\bibitem{cubic}
I.G. Korepanov, N.M. Sadykov,
\textit{Hexagon cohomologies and polynomial TQFT actions},
\href{https://arxiv.org/abs/1707.02847}{arXiv:1707.02847}.

\bibitem{Lickorish}
W.B.R. Lickorish,
\textit{Simplicial moves on complexes and manifolds},
Geom. Topol. Monogr. {\bf 2} (1999), 299--320.
\href{http://www.arxiv.org/abs/math/9911256}{arXiv:math/9911256}.

\bibitem{Pachner}
U. Pachner,
\textit{PL homeomorphic manifolds are equivalent by elementary shellings},
Europ. J. Combinatorics {\bf 12} (1991), 129--145.

\bibitem{Scorpan}
A. Scorpan,
\textit{The Wild World of 4-Manifolds},
American Mathematical Society, Providence, Rhode Island, 2005.

\end{thebibliography}
\end{document}